\documentclass[11pt, reqno]{article}

\usepackage{geometry}                
\geometry{letterpaper}                   
\usepackage{graphicx}
\usepackage{verbatim} 
\usepackage{amssymb}
\usepackage{epstopdf}
\usepackage{amsmath,amsthm,amsfonts}
\usepackage{amssymb}
\usepackage{epsfig}
\usepackage{setspace}
\usepackage{color}

\newtheorem{theorem}{Theorem}[section]
\newtheorem{lemma}[theorem]{Lemma}
\newtheorem{proposition}[theorem]{Proposition}

\newtheorem{obs}[theorem]{Observation}

\newtheorem{corollary}[theorem]{Corollary}

\DeclareGraphicsRule{.tif}{png}{.png}{`convert #1 `dirname #1`/`basename #1 .tif`.png}

\def\nn{{\mathbb N}}
\newcommand{\si}{\sigma}

\title{On the number of even roots of permutations}
\author{Lev Glebsky, Melany Lic\'on, Luis Manuel Rivera}
\date{}                                           

\begin{document}
\maketitle

\begin{abstract}
Let $\sigma$ be a permutation on $n$ letters. We say that a permutation $\tau$ is an even (resp. odd) $k$th root of $\sigma$ if $\tau^k=\sigma$ and $\tau$ is an even (resp. odd) permutation. In this article, we obtain generating functions for the number of even and odd $k$th roots of a permutation, in terms of its cycle type. Our result implies known generating functions of Moser and Wyman and also some generating functions for sequences in The On-line Encyclopedia of Integer Sequences (OEIS).
\end{abstract}

{\it Keywords:}  Roots of permutations; even permutations; generating functions.

{\it AMS Subject Classification Numbers:} 05A05; 05A15.

\section{Introduction}

 A classical problem in group theory and combinatorics is the study of problems related to the solution of the equation $x^k=a$ over groups, where $k$ is a fixed positive integer (see, e.g.,~\cite{chigi, lucidopour, lucidopour2, mw, sade, sade2, york}). One of the most studied situations is the case of the symmetric group $S_n$. For example, there is a  characterization that determines when a given permutation has a $k$th root in $S_n$ (see, e.g.,~\cite{annin, boucher, chs}) and there are several results about the probability that a randomly selected permutation of length $n$ has a $k$th root  (see, e.g.,~\cite{bona, cher, mw, niem, pouyanne}). In additon, Pavlov \cite{pavlov1} gave an explicit formula for the number of solutions in $S_n$ of the equation $x^k=\sigma$, and Lea\~nos et al., \cite{lmr} gave a multivariable exponential generating function. Finally, Roichman \cite{roichman} gave a formula for such a number expressed as an alternating sum of $\mu$-unimodal $k$th roots of the identity permutation.   
 
In this article, we are interested in the number of even permutations as the $k$th roots of a given permutation. To our knowledge\footnote{After finishing this work, we were aware of the existence of a generating function, given in terms of the cycle index of the symmetric group $S_n$ due to Chernoff \cite{cher2}.}, there are only few results in this direction and only for the case of the identity permutation. Moser and Wyman~\cite{mw} studied the case of $k=2$. In OEIS~\cite{oeis} there are only a few sequences for the number of even $k$th roots of identity permutation: A000704 ($k=2$), A061129 ($k=4)$, A061130 ($k=6$), A061131 ($k=8$) and A061132 ($k=10$). For the odd $k$th roots of the identity permutation, in OEIS we find sequences A001465 ($k=2$), A061136 ($k=4$) and A061137 ($k=6$). 


\subsection{Basic definitions and main result}

In order to formulate our main result, we need some definitions and notation. The {\it cycle type} of an $n$-permutation is a vector ${\bf c}=(c_1, \dots, c_n)$, which means that for every $i \in [n]$, the permutation has $c_i$ cycles of length $i$. We say that a permutation $\sigma$ is of {\it cycle type} $(\ell_1)^{a_1} \dots (\ell_m)^{a_m}$, with $a_i>0$, if $\si$ has exactly $a_i$ cycles of length $\ell_i$ in its disjoint cycle factorization and does not have any cycles of any other length. We use $\nn$ (respectively $\nn_0$) to denote the set of positive (respectively, non-negative) integers. Let $k, \ell \in \nn$.  Let

\[
	G_k(\ell)  = \{ g \ \in \nn : \gcd(g\ell, k) = g  \}.
\]
It is easy to see that if $k=p_1^{a_1} \cdots p_j^{a_j}$, where $p_1, \dots, p_j$ are distinct primes and $a_i >0$ for $i \in \{1, \dots, j\}$, then  
\[
	G_k(\ell)  = \left\{ p_1^{b_1} \cdots p_j^{b_j} : b_i=a_i \text{ if } p_i  | \ell \text{ and } b_i \in \{0, 1, \dots, a_i\} \text{ if } p_i \not | \ell   \right\}.
\]

The main result of this paper is the following. 
\begin{theorem}\label{maintheorem} Let $k, n$ be positive integer. Let $c_1,\dots,c_n$ be non-negative integers such that $n=c_1+2c_2+\cdots +nc_n$. Then the coefficient of $\frac{t_1^{c_1} \cdots t_{n}^{c_n}}{c_1! \cdots c_n!}$ in the expansion of
\begin{eqnarray*}
\frac{1}{2}\exp \left(\sum_{\ell \geq 1} \sum_{g \in G_k(\ell)} \frac{\ell^{g-1}}{g}t^g_{\ell} \right)+\frac{1}{2}{\rm exp}\left(\sum_{\ell \geq 1} \sum_{g \in G_k(\ell)}(-1)^{\ell g+1}\frac{\ell^{g-1}}{g}t_\ell^g\right)
\end{eqnarray*}
is the number of even $k$th roots of a permutation of cycle type ${\bf c}=(c_1, \dots, c_n)$, and in the expansion of 
\begin{eqnarray*}
\frac{1}{2}\exp \left(\sum_{\ell \geq 1} \sum_{g \in G_k(\ell)} \frac{\ell^{g-1}}{g}t^g_{\ell} \right)-\frac{1}{2}{\rm exp}\left(\sum_{\ell \geq 1} \sum_{g \in G_k(\ell)}(-1)^{\ell g+1}\frac{\ell^{g-1}}{g}t_\ell^g\right).
\end{eqnarray*}
is the number of odd $k$th roots of a permutation of cycle type  ${\bf c}$.
\end{theorem}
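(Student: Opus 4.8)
The plan is to reduce the whole statement to the already known formula for the \emph{total} number of $k$th roots together with a single signed sum. Write $E(\mathbf{c})$ and $O(\mathbf{c})$ for the number of even and odd $k$th roots of a permutation $\sigma$ of cycle type $\mathbf{c}=(c_{1},\dots,c_{n})$. Then $E(\mathbf{c})+O(\mathbf{c})$ is the total number of $k$th roots of $\sigma$, and by the multivariable generating function of Lea\~nos, Moreno and Rivera~\cite{lmr} this total is the coefficient of $\frac{t_{1}^{c_{1}}\cdots t_{n}^{c_{n}}}{c_{1}!\cdots c_{n}!}$ in $\exp\big(\sum_{\ell\ge1}\sum_{g\in G_{k}(\ell)}\frac{\ell^{g-1}}{g}t_{\ell}^{g}\big)$; on the other hand $E(\mathbf{c})-O(\mathbf{c})=\sum_{\tau^{k}=\sigma}\operatorname{sgn}(\tau)$. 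Since $E=\frac12\big((E+O)+(E-O)\big)$ and $O=\frac12\big((E+O)-(E-O)\big)$, the two asserted formulas follow at once by adding and subtracting, provided we show that $\sum_{\tau^{k}=\sigma}\operatorname{sgn}(\tau)$ is the coefficient of $\frac{t_{1}^{c_{1}}\cdots t_{n}^{c_{n}}}{c_{1}!\cdots c_{n}!}$ in $\exp\big(\sum_{\ell\ge1}\sum_{g\in G_{k}(\ell)}(-1)^{\ell g+1}\frac{\ell^{g-1}}{g}t_{\ell}^{g}\big)$.

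To evaluate this signed sum I would revisit the combinatorial mechanism underlying the formula of~\cite{lmr}. If $\tau^{k}=\sigma$, then each $m$-cycle of $\tau$ becomes, on raising $\tau$ to the $k$th power, a product of $\gcd(m,k)$ disjoint cycles of $\sigma$, each of length $m/\gcd(m,k)$; hence $\tau$ determines a partition of the set of disjoint cycles of $\sigma$ into groups, each group consisting of $g$ cycles of a common length $\ell$ with $g\in G_{k}(\ell)$ and arising as the image of a single $g\ell$-cycle of $\tau$. Conversely, the number of $g\ell$-cycles whose $k$th power equals a prescribed product of $g$ prescribed disjoint $\ell$-cycles is $(g-1)!\,\ell^{g-1}$; this is precisely $g!\cdot\tfrac{\ell^{g-1}}{g}$, so that applying the exponential formula with the cycles of $\sigma$ as labeled atoms (coloured by their length) and $(g-1)!\,\ell^{g-1}$ as the weight of a ``connected'' piece on $g$ atoms of colour $\ell$ reproduces the generating function of~\cite{lmr}. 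The one extra ingredient is that $\operatorname{sgn}(\tau)=\prod(-1)^{g\ell-1}$, the product running over the cycles of $\tau$, equivalently over the groups of the induced partition: this holds because the sign of a permutation depends only on its cycle lengths and a $g\ell$-cycle has sign $(-1)^{g\ell-1}=(-1)^{\ell g+1}$.

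Consequently $\operatorname{sgn}(\tau)$ is multiplicative over the connected pieces of the construction, since distinct groups act on disjoint letters, and it is constant on the $(g-1)!\,\ell^{g-1}$ interleavings of a fixed group, since they all yield a cycle of the same length $g\ell$. Rerunning the exponential-formula computation of~\cite{lmr} with the weight of a connected piece on $g$ atoms of colour $\ell$ changed from $(g-1)!\,\ell^{g-1}$ to $(-1)^{g\ell-1}(g-1)!\,\ell^{g-1}$ then gives $\sum_{\tau^{k}=\sigma}\operatorname{sgn}(\tau)=c_{1}!\cdots c_{n}!\,\big[t_{1}^{c_{1}}\cdots t_{n}^{c_{n}}\big]\exp\big(\sum_{\ell\ge1}\sum_{g\in G_{k}(\ell)}(-1)^{g\ell-1}(g-1)!\,\ell^{g-1}\tfrac{t_{\ell}^{g}}{g!}\big)$, and since $(-1)^{g\ell-1}(g-1)!\,\ell^{g-1}/g!=(-1)^{\ell g+1}\tfrac{\ell^{g-1}}{g}$ this is exactly the required exponential. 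Combining with the formula for $E+O$ finishes the argument.

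I expect the main difficulty to be organizational rather than substantive: one has to set up the weighted exponential formula carefully — the cycles of $\sigma$ as labeled atoms, the admissible monochromatic groupings-with-interleavings as the connected structures, and the verification that $\operatorname{sgn}(\tau)$ really is a product of one local factor per connected piece, in particular that it does not depend on which of the $(g-1)!\,\ell^{g-1}$ interleavings of a group is chosen. Apart from that, the only inputs are the count $(g-1)!\,\ell^{g-1}$ already implicit in~\cite{lmr} and the elementary sign of a single cycle, so the proof is in essence a sign-weighted refinement of the known one.
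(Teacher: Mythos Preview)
Your proposal is correct and follows essentially the same route as the paper: both arguments reduce to showing that $E(\mathbf c)-O(\mathbf c)=\sum_{\tau^k=\sigma}\operatorname{sgn}(\tau)$ is the coefficient of $\tfrac{t_1^{c_1}\cdots t_n^{c_n}}{c_1!\cdots c_n!}$ in the signed exponential, and then combine this with the known formula of~\cite{lmr} for $E+O$ via $E=\tfrac12\big((E+O)+(E-O)\big)$. The only difference is packaging: the paper first proves a factorization $\operatorname{re}_k(\sigma)-\operatorname{ro}_k(\sigma)=\prod_i(\operatorname{re}_k(\sigma_i)-\operatorname{ro}_k(\sigma_i))$ over the distinct cycle lengths and then applies the single-variable exponential formula length by length, whereas you run the multivariate exponential formula in one shot with the cycles of $\sigma$ as coloured atoms and $\operatorname{sgn}$ as a weight multiplicative over connected pieces---a slightly more compact but equivalent organization.
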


The known result of the identity permutation is a consequence of this theorem. The outline of this paper is as follows. In Section~\ref{proofs}, we will prove several propositions and lemmas that we use in the proof of our main result. The proof of Theorem~\ref{maintheorem} is at the end of this section. In Section~\ref{cases}, we show a few special cases of Theorem~\ref{maintheorem}, which allow some nice simplifications.

\section{Auxiliary results and proof of Theorem~\ref{maintheorem}}\label{proofs}

First, we present two known results, which will be used in the proof or our main result. 

\begin{proposition}[{\cite[Proposition 5]{lmr}}]\label{coro-sol} A permutation of cycle type $(\ell)^c$ has a $kth$ root if and only if the equation
			\[
		g_1 x_1 + \dots + g_h x_h = c
		\]
	has non-negative integer solutions, where $G_k(\ell)=\{ g_1, \dots, g_h\}$.
	\end{proposition}

The following result shows a generating function for the number of $k$th roots of a permutation.

\begin{theorem}[{\cite[Theorem 2]{lmr}}]\label{theolmr} Let $k, n$ be positive integer. Let $c_1,\dots,c_n$ be non-negative integers such that $n=c_1+2c_2+\cdots +nc_n$. Then the coefficient of $\frac{t_1^{c_1} \cdots t_{n}^{c_n}}{c_1! \cdots c_n!}$ in the expansion of
\begin{equation*}
\exp\left(\sum_{\ell\geq1}\sum_{g \in G_m(\ell)}\frac{\ell^{g-1}}{g}t_\ell^g\right)
\end{equation*}
is the number of $k$th roots of a permutation of cycle type ${\bf c}=(c_1,\dots,c_n)$.
\end{theorem}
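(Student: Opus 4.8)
The plan is to give an exponential-formula proof, reducing the count of $k$th roots to a single local count per cycle length and then assembling the global answer. The starting observation is the standard fact that raising a cycle of length $L$ to the $k$th power splits it into $\gcd(L,k)$ cycles, each of length $L/\gcd(L,k)$. Consequently, if $\tau^k=\sigma$, then each cycle of $\tau$ produces, under $k$th powering, a set of $\sigma$-cycles that all share one length $\ell$; a $\tau$-cycle of length $L$ yielding cycles of length $\ell$ must satisfy $L/\gcd(L,k)=\ell$, that is $L=g\ell$ with $\gcd(g\ell,k)=g$, i.e.\ $g\in G_k(\ell)$, and it then produces exactly $g$ cycles of length $\ell$. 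First I would record this as a bijection: a $k$th root of $\sigma$ is precisely the data consisting, for each $\ell$, of a set partition of the $c_\ell$ cycles of length $\ell$ into blocks whose sizes lie in $G_k(\ell)$, together with, for each block of size $g$, a choice of a single $(g\ell)$-cycle whose $k$th power is the product of the $\ell$-cycles in that block. (That the answer depends only on the cycle type is immediate, since conjugation by any permutation carries roots to roots.)

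The heart of the argument is the \emph{local lifting count}: given $g$ pairwise disjoint $\ell$-cycles $C_1,\dots,C_g$ with $g\in G_k(\ell)$, the number of $(g\ell)$-cycles $\rho$ with $\rho^k=C_1\cdots C_g$ equals $\ell^{g-1}(g-1)!$. To prove this I would write $\rho=(a_0\,a_1\,\cdots\,a_{g\ell-1})$ and analyze $\rho^k$ directly: since $\rho^k(a_i)=a_{i+k}$, its cycles are the orbits of $i\mapsto i+k$ on $\mathbb Z/g\ell$, which, because $\gcd(g\ell,k)=g$, are exactly the $g$ residue classes modulo $g$; moreover $\rho$ sends class $j$ to class $j+1$, so $\rho$ cyclically permutes the $g$ blocks. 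Writing $k=gm$ with $\gcd(m,\ell)=1$, the action of $\rho^k$ inside each block is $s\mapsto s+m$ on $\mathbb Z/\ell$, so matching it to the prescribed cycle $C_j$ forces the alignment map to be affine with a single free ``phase'' parameter ranging over $\mathbb Z/\ell$. Counting the underlying linear sequences then gives $g!$ choices of block-to-class assignment times $\ell^{g}$ phase choices, and dividing by the $g\ell$ cyclic rotations that represent the same cycle yields $g!\,\ell^{g}/(g\ell)=\ell^{g-1}(g-1)!$.

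With the lifting count in hand, the assembly is routine via the exponential formula. For a fixed length $\ell$, treating the $c$ cycles of length $\ell$ as labeled atoms and the lift of a block of size $g$ as a ``connected structure'' of weight $\ell^{g-1}(g-1)!$, the exponential formula gives the exponential generating function $\exp\big(\sum_{g\in G_k(\ell)}\ell^{g-1}(g-1)!\,t_\ell^{g}/g!\big)=\exp\big(\sum_{g\in G_k(\ell)}(\ell^{g-1}/g)\,t_\ell^{g}\big)$ for the number of ways to build the length-$\ell$ part of a root. Because each $\tau$-cycle is tied to a single length and distinct lengths consume disjoint atoms, the total count factorizes over $\ell$, so the full generating function is the product over $\ell$ of these factors, namely $\exp\big(\sum_{\ell\geq1}\sum_{g\in G_k(\ell)}(\ell^{g-1}/g)\,t_\ell^{g}\big)$; extracting the coefficient of $\prod_\ell t_\ell^{c_\ell}/c_\ell!$ recovers the product of the per-length counts, which is exactly the number of $k$th roots of a permutation of cycle type $\mathbf c$.

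The step I expect to be the main obstacle is the local lifting count, specifically the bookkeeping needed to count each $\rho$ exactly once: one must carefully distinguish sequences from cycles (dividing by $g\ell$), check that every choice of block assignment and phases yields a genuine single $(g\ell)$-cycle, and confirm that no nontrivial rotation fixes a sequence so that the division is clean. Once that lemma is pinned down, the reduction and the exponential-formula bookkeeping are standard, and Proposition~\ref{coro-sol} serves as a consistency check, since the extracted coefficient is positive exactly when the block-size equation $\sum_{g\in G_k(\ell)} g\,x_g=c_\ell$ admits a non-negative integer solution for every $\ell$.
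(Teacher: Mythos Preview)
Your proof is correct, but note that this paper does not actually prove Theorem~\ref{theolmr}: it is quoted from \cite{lmr}, and serves here as a known input to the proof of Theorem~\ref{maintheorem}. That said, your argument is essentially the same as the one behind the cited result. Your local lifting count $\ell^{g-1}(g-1)!$ is exactly Proposition~\ref{proppegado} specialized to $p=1$ (and the general $p$ formula $(gp)!\,\ell^{p(g-1)}/(g^p p!)$ follows from yours by multiplying by the number $\binom{gp}{g,\dots,g}/p!$ of unordered partitions of $gp$ cycles into $p$ blocks of size $g$). The assembly step---partition the $\ell$-cycles into blocks with sizes in $G_k(\ell)$, lift each block, then apply the exponential formula and take the product over $\ell$---is precisely the structure the present paper uses in the proof of Lemma~\ref{fungen-lfijo} for the signed version, invoking Stanley's Proposition~5.1.3 where you invoke the exponential formula. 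So there is no genuine methodological difference; you have reconstructed the argument of \cite{lmr} rather than offered an alternative route.
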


The outline of the proof is as follows.  First, we work   with the difference between the number of even $k$th roots and the number of odd $k$th roots of a permutation (Lemma~\ref{dife-gene}).  The next step was to obtain a multivariable exponential generating function for such a difference (Lemma~\ref{egfdifference}). In order to do this, first we assign a sign to the number of $k$th roots, of certain type, of permutations with all its cycles of the same length (Proposition~\ref{signo-pari}). Using this, we obtain an exponential generating function for the difference between the number of even $k$th roots and odd $k$th roots of permutations with all its cycles of the same length (Lemma~\ref{fungen-lfijo}). Finally, the proof of Theorem~\ref{maintheorem}  is obtained as a consequence of Theorem~\ref{theolmr} and Lemma~\ref{egfdifference}.

We need the following easy proposition about groups in general.
\begin{proposition}\label{pr_diferen2}
  Let $G$ be a group and $K$ be a field. Let $\phi \colon G\to (K, \cdot)$ ($g\mapsto \phi^g$) be a homomorphism to the multiplicative group of $K$ and
  $X, Y\subseteq G$ be finite. Then
 \[
\left(\sum_{g\in X}\phi^g\right)\left(\sum_{h\in Y}\phi^h\right)=\sum_{\substack{g\in X \\ h\in Y}} \phi^{gh}.
 \]
\end{proposition}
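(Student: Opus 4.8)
The plan is to prove the identity by a direct expansion of the left-hand side using the distributive law in $K$, followed by a single application of the homomorphism property of $\phi$. First I would use that $X$ and $Y$ are finite and that $K$ is a commutative ring, so that distributivity extends to finite sums, in order to rewrite
\[
\left(\sum_{g\in X}\phi^g\right)\left(\sum_{h\in Y}\phi^h\right)=\sum_{g\in X}\sum_{h\in Y}\phi^g\phi^h .
\]
Then, since $\phi$ is a homomorphism into the multiplicative structure of $K$, we have $\phi^g\phi^h=\phi^{gh}$ for all $g,h\in G$; substituting this identity term by term into the double sum gives
\[
\sum_{g\in X}\sum_{h\in Y}\phi^g\phi^h=\sum_{\substack{g\in X\\ h\in Y}}\phi^{gh},
\]
which is exactly the claimed equality.

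The only point that deserves a word of caution — and it is a matter of reading the notation correctly rather than a genuine difficulty — is that the sum on the right is indexed by ordered pairs $(g,h)\in X\times Y$, not by the set $\{gh:g\in X,\ h\in Y\}$ of products; hence each value $\phi^{gh}$ is counted once for every pair $(g,h)$ realizing it, which matches precisely the multiplicities produced when the product on the left is expanded. With that understood, there is no real obstacle: the statement is a purely formal consequence of distributivity together with multiplicativity of $\phi$, and nothing beyond the finiteness of $X$ and $Y$ (used only to legitimize the finite expansion) is required. In the intended application $\phi$ will be a sign-type homomorphism and $X,Y$ will be sets of $k$th roots of permutations, so that the proposition lets products of signed counts collapse into a single signed count over the set of products.
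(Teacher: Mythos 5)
Your proof is correct: expanding the product of the two finite sums by distributivity in $K$ and then replacing $\phi^g\phi^h$ by $\phi^{gh}$ via the homomorphism property is exactly the intended argument, and your remark that the right-hand sum is indexed by pairs $(g,h)\in X\times Y$ (so multiplicities match) is the right reading of the notation. The paper labels this proposition as easy and gives no proof at all, so your write-up simply supplies the standard argument it leaves implicit.
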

Let $\operatorname{re}_k(\sigma)$ (resp. $\operatorname{ro}_k(\sigma)$) denote the number of even (resp. odd) $k$th
 roots  of permutation $\sigma$. The {\it support} of an $n$-permutation $\si$ is defined as ${\rm supp}(\si)=\{a \in \{1, \dots, n\}~\colon \si(a)\neq a\}$.

\begin{proposition}\label{cor_diferen2}
Let $\si$ be a permutation such that $\sigma=\si_1\si_2$ and ${\rm supp}(\sigma_1)\cap {\rm supp}(\sigma_2)=\emptyset$. Let $\operatorname{re}_k'(\si)$ (resp.  $\operatorname{ro}_k'(\si)$) be the number of even  (resp. odd)  $k$th roots $\tau$ of $\sigma$ such that $\tau=\tau_1\tau_2$ with $\tau_1^k=\si_1$ and $\tau_2^k=\si_2$. Then
\[
\operatorname{re}_k'(\si)- \operatorname{ro}_k'(\si)=\left(\operatorname{re}_k(\sigma_1)-\operatorname{ro}_k(\sigma_1)\right)\left(\operatorname{re}_k(\sigma_2)-\operatorname{ro}_k(\sigma_2)\right).
\]
\end{proposition}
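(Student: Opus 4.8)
The plan is to set up a bijective/counting argument between $k$th roots of $\sigma$ of the stated split form and pairs of $k$th roots of $\sigma_1$ and $\sigma_2$, and then to track the parity (sign) multiplicatively through that correspondence. Since $\mathrm{supp}(\sigma_1)\cap\mathrm{supp}(\sigma_2)=\emptyset$, any $\tau = \tau_1\tau_2$ with $\tau_1^k=\sigma_1$, $\tau_2^k=\sigma_2$ has $\tau_1,\tau_2$ with disjoint supports (each $\tau_i$ acts only on $\mathrm{supp}(\sigma_i)$ together with possibly some fixed points of $\sigma_i$ — here one should be slightly careful and regard $\tau_1,\tau_2$ as permutations of the full ground set that are the identity outside the respective supports, so that $\tau_1\tau_2$ makes sense and the factorization is unique). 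Conversely every pair $(\tau_1,\tau_2)$ of such roots with disjoint supports gives a $k$th root $\tau_1\tau_2$ of $\sigma$ of the required type, and this correspondence is a bijection. Hence $\operatorname{re}_k'(\sigma)+\operatorname{ro}_k'(\sigma) = (\operatorname{re}_k(\sigma_1)+\operatorname{ro}_k(\sigma_1))(\operatorname{re}_k(\sigma_2)+\operatorname{ro}_k(\sigma_2))$ as a first sanity check, but what we actually want is the signed version.

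The key observation is that the sign homomorphism $\operatorname{sgn}\colon S_n \to (\{+1,-1\},\cdot) \subseteq (K,\cdot)$ is a group homomorphism to the multiplicative group of the field $K = \mathbb{Q}$, so Proposition~\ref{pr_diferen2} applies directly. Take $X$ to be the set of $k$th roots $\tau_1$ of $\sigma_1$ (viewed inside $S_n$, identity off $\mathrm{supp}(\sigma_1)$) and $Y$ the set of $k$th roots $\tau_2$ of $\sigma_2$; both are finite subsets of $S_n$. Then
\[
\sum_{\tau_1 \in X} \operatorname{sgn}(\tau_1) = \operatorname{re}_k(\sigma_1) - \operatorname{ro}_k(\sigma_1), \qquad \sum_{\tau_2 \in Y} \operatorname{sgn}(\tau_2) = \operatorname{re}_k(\sigma_2) - \operatorname{ro}_k(\sigma_2),
\]
because each even root contributes $+1$ and each odd root $-1$. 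Applying Proposition~\ref{pr_diferen2} with $\phi = \operatorname{sgn}$ yields
\[
\left(\operatorname{re}_k(\sigma_1) - \operatorname{ro}_k(\sigma_1)\right)\left(\operatorname{re}_k(\sigma_2) - \operatorname{ro}_k(\sigma_2)\right) = \sum_{\substack{\tau_1 \in X \\ \tau_2 \in Y}} \operatorname{sgn}(\tau_1 \tau_2).
\]

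It remains to identify the right-hand side with $\operatorname{re}_k'(\sigma) - \operatorname{ro}_k'(\sigma)$. By the bijection described above, the map $(\tau_1,\tau_2) \mapsto \tau_1\tau_2$ is a bijection from $X \times Y$ onto the set of $k$th roots $\tau$ of $\sigma$ admitting a factorization $\tau = \tau_1\tau_2$ with $\tau_i^k = \sigma_i$; and since the supports are disjoint, $\operatorname{sgn}(\tau_1\tau_2) = \operatorname{sgn}(\tau_1)\operatorname{sgn}(\tau_2)$ equals $+1$ exactly when $\tau_1\tau_2$ is even and $-1$ when it is odd. Therefore the sum on the right counts $\operatorname{re}_k'(\sigma)$ with sign $+1$ and $\operatorname{ro}_k'(\sigma)$ with sign $-1$, giving $\operatorname{re}_k'(\sigma) - \operatorname{ro}_k'(\sigma)$, which is exactly the claimed identity.

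I expect the only real subtlety — and thus the main thing to nail down carefully — is the bijection and the well-definedness of the factorization $\tau = \tau_1\tau_2$: one must argue that a root $\tau$ of the stated type determines $\tau_1$ and $\tau_2$ uniquely (so that there is no overcounting when we sum over $X \times Y$), which follows from the disjointness of supports together with the fact that $\tau_i$ permutes $\mathrm{supp}(\sigma_i)$ and fixes everything else. Once that is in place, the rest is a direct application of Proposition~\ref{pr_diferen2}, so the proof is short.
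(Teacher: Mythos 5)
Your proof is correct and follows essentially the same route as the paper: apply Proposition~\ref{pr_diferen2} with $\phi=\operatorname{sgn}$ and with $X$, $Y$ the sets of $k$th roots of $\sigma_1$, $\sigma_2$, then identify the signed sums with the differences $\operatorname{re}_k-\operatorname{ro}_k$. Your extra remarks on the uniqueness of the factorization $\tau=\tau_1\tau_2$ are a careful addition but do not change the argument, which the paper states more briefly.
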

\begin{proof}
Consider the parity of permutations  as a homomorphism $\phi \colon S_n \to \{-1,1\}$. Let  $X=\{\tau_1 \in S_n \colon \tau_1^k=\sigma_1\}$ and $Y=\{\tau_2 \in S_n \colon \tau_2^k=\sigma_2\}$. Then $
\sum_{\tau_1 \in X}\phi^{\tau_1}=\operatorname{re}_k(\si_1)- \operatorname{ro}_k(\si_1)$ and $ \sum_{\tau_2 \in Y}\phi^{\tau_2}=\operatorname{re}_k(\si_2)- \operatorname{ro}_k(\si_2)$. Therefore, by Proposition~\ref{pr_diferen2} we have that
\[
\sum_{\substack{\tau_1 \in X\\
\tau_2 \in Y}}\phi^{\tau_1\tau_2}=re'_k(\si)- ro'_k(\si).
\qedhere 
\]
\end{proof}

The following result shows that for a given permutation $\sigma$ we can obtain the difference $\operatorname{re}_k(\si)-\operatorname{ro}_k(\si)$ by working with the different lengths in the cycles of $\si$ separately.
  
\begin{lemma}\label{dife-gene} Let $\sigma$ be an $n$-permutation that has $k$th roots. Suppose that the disjoint cycle factorization of $\si$ can be expressed as the product $\si_1\si_2 \cdots \si_m$ where $\si_i$ is the product of all the disjoint cycles of length $\ell_i$ in $\si$, for every $i$, with   $\ell_i \neq \ell_j$, for $i \neq j$. Then
\[
\operatorname{re}_k(\si)-\operatorname{ro}_k(\si)=\prod_{i=1}^{m}\left(\operatorname{re}_k(\si_i)-\operatorname{ro}_k(\si_i)\right).
\]
\end{lemma}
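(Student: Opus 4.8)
The plan is to prove this by induction on $m$, the number of distinct cycle lengths appearing in $\si$, using Proposition~\ref{cor_diferen2} as the inductive engine. The base case $m=1$ is trivial, so suppose $m \geq 2$ and write $\si = \si_1 \si_2 \cdots \si_m$ as in the statement. Set $\rho = \si_1 \si_2 \cdots \si_{m-1}$, so that $\si = \rho \, \si_m$ and ${\rm supp}(\rho) \cap {\rm supp}(\si_m) = \emptyset$, since $\rho$ involves only cycles of lengths $\ell_1, \dots, \ell_{m-1}$ while $\si_m$ involves only cycles of length $\ell_m \neq \ell_i$ for $i < m$.

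The crucial observation is that Proposition~\ref{cor_diferen2} can be upgraded from a statement about the restricted counts $\operatorname{re}_k'$, $\operatorname{ro}_k'$ to a statement about the full counts $\operatorname{re}_k$, $\operatorname{ro}_k$ whenever the two factors have supports of different cardinality behavior — more precisely, whenever \emph{every} $k$th root $\tau$ of $\si = \si_1 \si_2$ necessarily factors as $\tau = \tau_1 \tau_2$ with $\tau_1^k = \si_1$, $\tau_2^k = \si_2$ and the supports disjoint. Here this holds: a cycle of length $d$ in a permutation $\tau$ contributes, under taking the $k$th power, to cycles of length $d/\gcd(d,k)$ in $\tau^k = \si$; since the cycle lengths $\ell_1, \dots, \ell_m$ are pairwise distinct, the cycles of $\tau$ are partitioned according to which $\si_i$ they feed into, and grouping them gives the required factorization $\tau = \tau_1 \cdots \tau_m$. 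Hence, writing $\tau_1' = \tau_1 \cdots \tau_{m-1}$, one gets $\operatorname{re}_k'(\si) = \operatorname{re}_k(\si)$ and $\operatorname{ro}_k'(\si) = \operatorname{ro}_k(\si)$ for the decomposition $\si = \rho\,\si_m$, so Proposition~\ref{cor_diferen2} yields
\[
\operatorname{re}_k(\si) - \operatorname{ro}_k(\si) = \bigl(\operatorname{re}_k(\rho) - \operatorname{ro}_k(\rho)\bigr)\bigl(\operatorname{re}_k(\si_m) - \operatorname{ro}_k(\si_m)\bigr).
\]
Applying the induction hypothesis to $\rho = \si_1 \cdots \si_{m-1}$ finishes the proof.

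The main obstacle — and the step I would be most careful about — is justifying that the restricted counts coincide with the unrestricted ones, i.e. that there is genuinely no loss of generality in Proposition~\ref{cor_diferen2} in this setting. This rests entirely on the elementary fact that the $k$th power map sends a $d$-cycle to a product of $\gcd(d,k)$ cycles each of length $d/\gcd(d,k)$, together with the hypothesis that the $\ell_i$ are distinct, which guarantees that no cycle of $\tau$ can "split across" two different $\si_i$'s and, conversely, that the cycles feeding into a fixed $\si_i$ all have lengths in a set disjoint from those feeding into $\si_j$ for $j \neq i$. Once this bookkeeping is in place the rest is a clean induction; no further computation is needed because all the arithmetic was already absorbed into Proposition~\ref{pr_diferen2} and Proposition~\ref{cor_diferen2}.
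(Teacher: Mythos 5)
Your proof is correct and takes essentially the same route as the paper: the paper also deduces the lemma from Proposition~\ref{cor_diferen2} by induction, citing as well known the fact that every $k$th root of $\sigma$ factors as $\tau_1\cdots\tau_m$ with $\tau_i^k=\sigma_i$, which is exactly the factorization you verify via the cycle-splitting argument. Your explicit justification that the restricted counts $\operatorname{re}_k'$, $\operatorname{ro}_k'$ agree with the full counts is the point the paper delegates to the reference \cite[\S 3]{lmr}.
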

\begin{proof}
It is well-known that every $k$th root of $\si$ can be written as $\tau_1\cdots \tau_m$ with $\tau_i^k=\sigma_i$, for every $i$ (see, e.g., \cite[\S 3]{lmr}). The result follows by Proposition~\ref{cor_diferen2} and induction.
 \end{proof}


Sometimes, we use the following fact: if $\alpha$ is an $\ell$-cycle, then $\alpha^m$ is a product of exactly ${\rm gcd}(m, \ell)$  disjoint $\ell/{\rm gcd}(m, \ell)$-cycles. Let $g, k, \ell$ be fixed positive integers and $p$ be a fixed non-negative integer. We use $f_{k, \ell, g, p}(c)$ to denote the number of permutations of cycle type $(g\ell)^{p}$ that are $k$th roots of a permutation of cycle type $(\ell)^{c}$,  $c \in \nn_0$. The following proposition has been proven, in essence, by Moreno et al.~\cite{lmr}.
\begin{proposition}\label{proppegado}
	Let $g, k, \ell$ be fixed positive integers and $p$ be a fixed non-negative integer. Let $c \in \nn_0$. If $g \in G_k(\ell)$ and $c=gp$, then  	
\[
f_{k, \ell, g, p}(c)=\frac{(gp)! \ell^{p(g-1)}}{g^pp!},
\]
and $f_{k, \ell, g, p}(c)=0$ in any other case.
\end{proposition}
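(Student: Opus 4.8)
The plan is to fix once and for all a permutation $\sigma$ of cycle type $(\ell)^{c}$ and to count the permutations $\tau$ of cycle type $(g\ell)^{p}$ with $\tau^{k}=\sigma$; this count does not depend on the chosen $\sigma$, since $\tau\mapsto\rho\tau\rho^{-1}$ is a cycle-type-preserving bijection from the $k$th roots of $\sigma$ onto those of $\rho\sigma\rho^{-1}$. First I would dispose of the degenerate cases using the fact recalled just before the proposition: each of the $p$ cycles of $\tau$ has length $g\ell$, so its $k$th power is a product of $\gcd(k,g\ell)$ disjoint cycles of length $g\ell/\gcd(k,g\ell)$. Hence $\tau^{k}$ can be of cycle type $(\ell)^{c}$ only if $g\ell/\gcd(k,g\ell)=\ell$, i.e. $\gcd(k,g\ell)=g$, i.e. $g\in G_{k}(\ell)$; and in that case every cycle of $\tau$ contributes exactly $g$ cycles of length $\ell$ to $\tau^{k}$, so $c=gp$. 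This already gives $f_{k,\ell,g,p}(c)=0$ whenever $g\notin G_{k}(\ell)$ or $c\neq gp$.

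Now assume $g\in G_{k}(\ell)$ and $c=gp$. The next step is to organize the count through a bijection. Writing a root $\tau=\alpha_{1}\cdots\alpha_{p}$ as the product of its $g\ell$-cycles, the previous paragraph shows that each $\alpha_{i}^{k}$ is a product of $g$ of the $\ell$-cycles of $\sigma$, and $\sigma=\alpha_{1}^{k}\cdots\alpha_{p}^{k}$; so $\tau$ determines a partition of the $gp$ cycles of $\sigma$ into $p$ unordered blocks of size $g$, together with, for each block $B$, a $g\ell$-cycle whose $k$th power is the product of the cycles in $B$. Conversely this data reconstructs $\tau$ uniquely (the support of the cycle attached to $B$ being the union of the supports of the cycles in $B$). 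Therefore
\[
f_{k,\ell,g,p}(gp)=\frac{(gp)!}{(g!)^{p}\,p!}\cdot M^{p},
\]
where $\tfrac{(gp)!}{(g!)^{p}p!}$ counts the partitions into $p$ blocks of size $g$, and $M=M(g,\ell,k)$ is the number of $g\ell$-cycles $\alpha$ on a fixed $g\ell$-point set with $\alpha^{k}$ equal to a fixed product $\beta_{1}\cdots\beta_{g}$ of $g$ disjoint $\ell$-cycles on that set.

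It remains to show $M=(g-1)!\,\ell^{g-1}$, which is essentially the computation carried out in~\cite{lmr}. The arithmetic input is that $g\mid k$ and $m:=k/g$ is coprime to $\ell$, both immediate from $\gcd(g\ell,k)=g$. I would represent $\alpha$ as a cyclic word $(y_{0}\,y_{1}\,\cdots\,y_{g\ell-1})$, so that there are $(g\ell)!$ linear words and each $g\ell$-cycle corresponds to exactly $g\ell$ of them. Grouping the positions $0,1,\dots,g\ell-1$ by residue modulo $g$, one checks that $\alpha^{k}$ preserves each of the $g$ residue classes and acts on each as an $\ell$-cycle (this uses $g\mid k$ and $\gcd(m,\ell)=1$). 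Hence $\alpha^{k}=\beta_{1}\cdots\beta_{g}$ forces a bijection between the $g$ residue classes and the cycles $\beta_{1},\dots,\beta_{g}$, and once such a bijection is chosen the word is determined on each class by a single free choice (which point of the corresponding $\beta_{i}$ goes into the smallest position of the class), $\ell$ possibilities per class. This produces $g!\,\ell^{g}$ linear words, so $M=\frac{g!\,\ell^{g}}{g\ell}=(g-1)!\,\ell^{g-1}$, and substituting back gives $f_{k,\ell,g,p}(gp)=\frac{(gp)!}{(g!)^{p}p!}\bigl((g-1)!\,\ell^{g-1}\bigr)^{p}=\frac{(gp)!\,\ell^{p(g-1)}}{g^{p}\,p!}$, as desired.

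I expect the main obstacle to be the bookkeeping in the evaluation of $M$: checking that the residue-class-to-cycle correspondence must be a bijection, that within a class the word is completely pinned down by the one free choice, and that the division by $g\ell$ correctly counts rotations of a linear word representing the same cycle. Apart from that, the argument is just the cycle-power fact combined with elementary counting, so nothing essentially new beyond~\cite{lmr} should be required.
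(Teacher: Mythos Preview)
Your argument is correct and is precisely the approach of~\cite{lmr} that the paper invokes (the paper itself gives no independent proof, only the citation). The structure---disposing of $g\notin G_k(\ell)$ or $c\neq gp$ via the cycle-power fact, then factoring the count as (partitions into size-$g$ blocks) $\times$ $M^p$, and computing $M=(g-1)!\,\ell^{g-1}$ by a linear-word/rotation count---is exactly what is done there, so nothing further is needed.
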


In view of previous proposition, for $g \in G_k(\ell)$ we define
\[
f_{k, \ell, g}(c) = \left\{
        \begin{array}{ll}
            f_{k, \ell, g, p}(c) & \quad c=gp \\
            0 & \quad \text{ other case}
        \end{array}
    \right.
    \]
Now, we assign a sign to the number $f_{k, \ell, g}(c)$, which helps to know whether the roots of cycle type $(g\ell)^p$ of a permutation of cycle type $(\ell)^c$ are even.

\begin{proposition}\label{signo-pari}
 Let $k, \ell$ be fixed positive integers. Let $g \in G_k(\ell), c \in \nn_0$ and  
 \[
a(c)=(-1)^{c(\ell g+1)/g}f_{k, \ell, g}(c).
\]
If $\sigma$ is a permutation of cycle type $(\ell)^{c}$ and $c=gp$, then $a(c) \neq 0$. In addition,  the $k$th roots of cycle type $(g\ell)^{p}$ of $\sigma$ are even permutations if and only if $a(c)>0$.
\end{proposition}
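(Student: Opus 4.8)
The plan is to reduce the claim to the elementary identity ``an $m$-cycle has sign $(-1)^{m-1}$'' together with Proposition~\ref{proppegado}.

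First I would dispose of the nonvanishing assertion and, at the same time, read off what the inequality $a(c)>0$ actually says. Since $c=gp$, the quantity $c(\ell g+1)/g = p(\ell g+1)$ is an honest integer, so $(-1)^{c(\ell g+1)/g} = (-1)^{p(\ell g+1)} \in \{-1,+1\}$; this is exactly the point where the hypothesis $c=gp$ is used. By Proposition~\ref{proppegado} and the definition of $f_{k,\ell,g}$, we have $f_{k,\ell,g}(c) = f_{k,\ell,g,p}(c) = (gp)!\,\ell^{p(g-1)}/(g^p p!)$, which is a positive integer. Hence $a(c) = (-1)^{p(\ell g+1)} f_{k,\ell,g}(c) \neq 0$, and moreover $a(c)>0$ if and only if $(-1)^{p(\ell g+1)}=1$, i.e. if and only if $p(\ell g+1)$ is even.

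Next I would compute the parity of an arbitrary root. Because $f_{k,\ell,g}(c)>0$, the permutation $\sigma$ does possess $k$th roots of cycle type $(g\ell)^p$; let $\tau$ be any one of them. Being of cycle type $(g\ell)^p$, $\tau$ is a product of $p$ pairwise disjoint $g\ell$-cycles (together with fixed points), so its sign equals $\big((-1)^{g\ell-1}\big)^{p} = (-1)^{p(g\ell-1)}$. In particular all such roots have the same parity, so the phrase ``the $k$th roots of cycle type $(g\ell)^p$ of $\sigma$ are even'' is unambiguous, and it holds precisely when $p(g\ell-1)$ is even.

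Finally I would match the two parity conditions: since $p(g\ell+1)-p(g\ell-1)=2p$ is even, the integers $p(g\ell-1)$ and $p(\ell g+1)$ have the same parity. Therefore $\tau$ is even $\iff$ $p(g\ell-1)$ is even $\iff$ $p(\ell g+1)$ is even $\iff$ $a(c)>0$, which is exactly the assertion. I do not expect a genuine obstacle; the proof is essentially bookkeeping, and the only two points that deserve a moment's care are checking that the exponent of $(-1)$ in the definition of $a(c)$ is an integer (which is where $c=gp$ enters) and the harmless replacement of $g\ell-1$ by $\ell g+1$ in that exponent, which is precisely the reason the generating function in Theorem~\ref{maintheorem} carries the factor $(-1)^{\ell g+1}$ rather than $(-1)^{\ell g-1}$.
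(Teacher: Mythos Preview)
Your argument is correct and follows essentially the same route as the paper: substitute $c=gp$, invoke Proposition~\ref{proppegado} for nonvanishing, and compute the sign of a product of $p$ disjoint $g\ell$-cycles. The only cosmetic difference is that the paper writes the sign of a $q$-cycle as $(-1)^{q+1}$ rather than $(-1)^{q-1}$, so it gets $(-1)^{p(\ell g+1)}$ directly without your final parity-matching step; since $(-1)^{q+1}=(-1)^{q-1}$, this is the same computation.
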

\begin{proof}

As $c=gp$, we have that $a(c)=(-1)^{p(\ell g+1)}f_{k, \ell, g, p}(c)$, and Proposition~\ref{proppegado} implies that $a(c) \neq 0$. The result follows because the sign of a $q$-cycle is $(-1)^{q + 1}$ and hence the sign of the product of $p$ cycles of length $(\ell g)$ is $(-1)^{p(\ell g + 1)}$. 
\end{proof}
The exponential generating function, in the variable $t_\ell$, for the number $a(c)$ in the previous proposition is given in the following result.  
\begin{proposition}\label{propgenmenos}
	Let $\ell, k \in \nn$. Let $g \in G_k(\ell)$ fixed. Then
	\[
	\sum_{c \geq 0 } (-1)^{c/g(\ell g+1)}f_{k, \ell, g}(c)\frac{t_{\ell}^{c}}{c!}=\exp \left((-1)^{\ell g +1}\frac{\ell^{(g-1)}}{g}t^g_{\ell} \right).
	\]
\end{proposition}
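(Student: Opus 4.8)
The plan is to expand the left-hand side as a power series in $t_\ell$ and match it term-by-term with the Taylor expansion of the right-hand side. By Proposition~\ref{proppegado}, the coefficient $f_{k,\ell,g}(c)$ vanishes unless $c = gp$ for some $p \in \nn_0$, so the sum on the left collapses to a sum over $p \geq 0$, with the term indexed by $c = gp$. First I would substitute $c = gp$, so that the sign becomes $(-1)^{(gp)/g \cdot (\ell g + 1)} = (-1)^{p(\ell g + 1)} = \big((-1)^{\ell g + 1}\big)^p$, and $f_{k,\ell,g}(c) = f_{k,\ell,g,p}(gp) = \dfrac{(gp)!\,\ell^{p(g-1)}}{g^p p!}$.

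Next I would substitute these into the general term: the $p$-th term of the series is
\[
\big((-1)^{\ell g + 1}\big)^p \cdot \frac{(gp)!\,\ell^{p(g-1)}}{g^p p!} \cdot \frac{t_\ell^{gp}}{(gp)!}
= \frac{1}{p!}\left((-1)^{\ell g + 1}\,\frac{\ell^{g-1}}{g}\,t_\ell^{g}\right)^{p},
\]
where the factor $(gp)!$ cancels cleanly against the $1/(gp)!$ coming from the exponential-generating-function normalization, and the remaining factors assemble into the $p$-th power of the single quantity $(-1)^{\ell g+1}\frac{\ell^{g-1}}{g}t_\ell^g$, divided by $p!$.

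Summing over $p \geq 0$ then gives exactly $\exp\!\big((-1)^{\ell g+1}\frac{\ell^{g-1}}{g}t_\ell^g\big)$ by the Taylor series of the exponential function, which is the claimed identity. I do not anticipate a genuine obstacle here; the only point requiring a little care is the bookkeeping of the exponent of $(-1)$ — verifying that $(-1)^{c(\ell g+1)/g}$ with $c = gp$ really is the $p$-th power of $(-1)^{\ell g+1}$, so that it can be absorbed into the $p$-th power on the right — and the cancellation of the factorial $(gp)!$, which is what makes the series a clean exponential rather than something more complicated. Both are routine, so the proof is essentially a direct computation once Proposition~\ref{proppegado} is invoked.
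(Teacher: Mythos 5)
Your proposal is correct and follows essentially the same route as the paper: invoke Proposition~\ref{proppegado} to restrict the sum to $c=gp$, substitute, cancel $(gp)!$, and recognize the exponential series with the sign $(-1)^{p(\ell g+1)}=\bigl((-1)^{\ell g+1}\bigr)^p$ absorbed into the $p$-th power. No differences worth noting.
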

\begin{proof}
From Proposition~\ref{proppegado} we have that $f_{k, \ell, g}(c) \neq 0$ if and only if $c = gp$, for some $p \in \nn_0$. Therefore  
\begin{eqnarray*}
\sum_{c \geq 0 } (-1)^{c/g(\ell g+1)}f_{k, \ell, g}(c)\frac{t_{\ell}^{c}}{c!}
&=& \sum_{p \geq 0 } (-1)^{p(\ell g+1)}	\frac{(gp)! \ell^{p(g-1)}}{g^pp!}  \frac{t_{\ell}^{gp}}{(gp)!}\\
&=& \sum_{p \geq 0 } \left((-1)^{(\ell g+1)}	\frac{ \ell^{(g-1)}}{g} t_{\ell}^{g}\right)^p \frac{1}{(p)!}\\
&=&\exp \left((-1)^{\ell g +1}\frac{\ell^{(g-1)}}{g}t^g_{\ell} \right).	
\qedhere 
\end{eqnarray*}

\end{proof}

Let $\operatorname{re}_k(\ell, c)$ (resp. $\operatorname{ro}_k(\ell, c)$) denote the number of even (resp. odd) $k$th roots of any permutation of cycle type $(\ell)^{c}$. 

\begin{lemma}\label{fungen-lfijo} Let $\ell \in \nn$. Then
\[
\sum_{c \geq 0} \left(\operatorname{re}_k(\ell, c)-\operatorname{ro}_k(\ell, c\right))\frac{t_\ell^{c}}{c!}={\rm exp}\left(\sum_{g \in G_k(\ell)}(-1)^{\ell g+1}\frac{\ell^{g-1}}{g}t_\ell^g\right).
\]
\end{lemma}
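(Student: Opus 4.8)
The plan is to reduce the statement to the single‑$g$ generating functions already computed in Proposition~\ref{propgenmenos}, using the multiplicativity of the signed root count established in Proposition~\ref{cor_diferen2}. The key structural fact is that every $k$th root of a permutation of cycle type $(\ell)^c$ decomposes, according to the lengths of its own cycles, into pieces each of which is a $k$th root of a sub‑permutation of cycle type $(\ell)^{c_g}$ whose root has all cycles of length $g\ell$ for a single $g \in G_k(\ell)$; this is exactly the content behind Proposition~\ref{proppegado} (a $g\ell$‑cycle raised to the $k$ is a product of $\gcd(k,g\ell)/?$‑many $\ell$‑cycles, and the constraint $\gcd(g\ell,k)=g$ is what makes $g$ admissible). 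So first I would record the combinatorial decomposition: a $k$th root $\tau$ of $\sigma$ of type $(\ell)^c$ is uniquely a product $\tau=\prod_{g\in G_k(\ell)}\tau_g$ with disjoint supports, where $\tau_g$ has cycle type $(g\ell)^{p_g}$, and correspondingly $\sigma=\prod_g \sigma_g$ with $\sigma_g$ of type $(\ell)^{g p_g}$ and $\sum_g g p_g = c$.

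Next I would apply Proposition~\ref{cor_diferen2} iteratively (an induction on $|G_k(\ell)|$, just as in the proof of Lemma~\ref{dife-gene}) to the fixed splitting of $\sigma$ into the $\sigma_g$'s. Summing the identity of Proposition~\ref{cor_diferen2} over all ways to partition the $c$ cycles of length $\ell$ among the indices $g$, and over all choices of root‑shape within each block, gives
\[
\operatorname{re}_k(\ell,c)-\operatorname{ro}_k(\ell,c)=\sum_{\substack{(c_g)_{g\in G_k(\ell)}\\ \sum_g c_g=c}}\binom{c}{(c_g)_g}\prod_{g\in G_k(\ell)} a_g(c_g),
\]
where $a_g(c_g)=(-1)^{c_g(\ell g+1)/g}f_{k,\ell,g}(c_g)$ is the signed count from Proposition~\ref{signo-pari} — the sign is correct precisely because, by Proposition~\ref{signo-pari}, $a_g(c_g)$ equals $+f_{k,\ell,g}(c_g)$ when the corresponding roots are even and $-f_{k,\ell,g}(c_g)$ when they are odd, so the product telescopes the parities of the blocks into the parity of $\tau$. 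The one point needing care is that the decomposition is a genuine bijection (each root is counted exactly once and with the right sign), i.e. that the supports of the $\tau_g$ are forced and disjoint; this follows from the standard fact that the $k$th‑power map sends a cycle of length $g\ell$ to cycles of length $\ell$ only when $g\in G_k(\ell)$, together with uniqueness of cycle decomposition.

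Finally I would translate the convolution into a product of exponential generating functions: multiplying the series in Proposition~\ref{propgenmenos} over all $g\in G_k(\ell)$ and using the binomial (exponential‑family) product rule,
\[
\sum_{c\ge 0}\bigl(\operatorname{re}_k(\ell,c)-\operatorname{ro}_k(\ell,c)\bigr)\frac{t_\ell^c}{c!}
=\prod_{g\in G_k(\ell)}\sum_{c_g\ge 0} a_g(c_g)\frac{t_\ell^{c_g}}{c_g!}
=\prod_{g\in G_k(\ell)}\exp\!\Bigl((-1)^{\ell g+1}\tfrac{\ell^{g-1}}{g}t_\ell^g\Bigr),
\]
and the product of exponentials is the exponential of the sum, which is exactly the claimed right‑hand side. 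The main obstacle is the second step: carefully justifying that the fixed‑support splitting of $\sigma$ into the $\sigma_g$ is the \emph{correct} grouping to feed into Proposition~\ref{cor_diferen2}, that summing over all set‑partitions of the $\ell$‑cycles reproduces every root exactly once, and that the accumulated sign from Proposition~\ref{signo-pari} is globally the parity of $\tau$ — everything after that is a mechanical EGF manipulation.
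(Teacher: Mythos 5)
Your proposal is correct and follows essentially the same route as the paper: decompose each $k$th root of a permutation of type $(\ell)^c$ by distributing the $\ell$-cycles among the admissible $g\in G_k(\ell)$ (your multinomial sum over $(c_g)_g$ is exactly the paper's sum over weak ordered partitions of the cycle set), attach the signed block counts $a_g(c_g)$ from Propositions~\ref{proppegado} and~\ref{signo-pari}, and convert the convolution into the product of the exponentials from Proposition~\ref{propgenmenos} via the EGF product rule (the paper cites Stanley's Proposition 5.1.3 for this step). Your extra appeal to Proposition~\ref{cor_diferen2} for the multiplicativity of the sign is just an explicit version of what the paper asserts when it says the product of signed $f$'s determines the parity of the root, so there is no substantive difference.
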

\begin{proof}
Let $\sigma$ be any permutation of cycle type $(\ell)^c$ and let $S$ be the set of all disjoint cycles in $\sigma$. Let $G_k(\ell)=\{g_1, \dots, g_m\}$, with $g_1 < \dots < g_m$. By Proposition~\ref{coro-sol}, $\sigma$ has $k$th roots if and only if the equation
	\[
	g_1x_1+\dots +g_mx_m=c
	\]
has non-negative integer solutions, where a solution $(p_1, \dots, p_m)$ of previous equation means that $\sigma$ has $k$th roots of cycle type $(g_1\ell)^{p_1} \dots (g_m\ell)^{p_m}$. We can obtain all these roots by running over all the weak ordered partitions $(A_1, \dots, A_m)$ of $S$. Indeed, if $(A_1, \dots, A_m)$ is such a partition, the number of $k$th of roots associated to this partition is given by $f_{k, \ell, g_1}(|A_1|) \cdots f_{k, \ell, g_m}(|A_m|)$, where this product is different from $0$ if $|A_i|$ is a multiple of $g_i$, for every $i$.   Let $\mathcal{A}$ be the set of all weak ordered partitions of $S$ into $m$ blocks. The number of $k$th roots of $\sigma$ is equal to 
	\[
	\sum_{(A_1, \dots, A_m) \in \mathcal{A}} f_{k, \ell, g_1}(|A_1|) \cdots f_{k, \ell, g_m}(|A_m|).
	\]
Now, for a given partition $(A_1, \dots, A_m)$ with 
	\[
	f_{k, \ell, g_1}(|A_1|) \cdots f_{k, \ell, g_m}(|A_m|) \neq 0,
	\]
the sign of 
	\[
(-1)^{|A_1|/g_1(\ell g_1+1)}f_{k, \ell, g_1}(|A_1|) \cdots (-1)^{|A_m|/g_m(\ell g_m+1)}f_{k, \ell, g_m}(|A_m|),
	\]
determine the parity of the $k$th roots of $\sigma$ of cycle type 
	$(g_1\ell)^{p_1} \dots (g_m\ell)^{p_m}$, where $p_i=|A_i|/g_i$. Therefore, the number $\operatorname{re}_k\left(\ell, c\right)-\operatorname{ro}_k\left(\ell, c\right)$ is equal to 
	\[
	\sum_{(A_1, \dots, A_m) \in \mathcal{A}} (-1)^{|A_1|/g_1(\ell g_1+1)}f_{k, \ell, g_1}(|A_1|) \cdots (-1)^{|A_m|/g_m(\ell g_m+1)}f_{k, \ell, g_m}(|A_m|), 
	\] 
and the desired exponential generating function is obtained by Proposition 5.1.3 in Stanley's book \cite{stanley} and Proposition~\ref{propgenmenos}. 
\end{proof}

 Let $\operatorname{re}_k({\bf c})$ (resp. $\operatorname{ro}_k({\bf c})$) denote the number of even (resp. odd) $k$th roots of a permutation of cycle type ${\bf c}$.  The following multivariable exponential generating function, in the variables $t_1, t_2, \dots $, for the difference between the number of even $k$th roots and the number of odd $k$th roots of permutations of any cycle type follows from Lemmas \ref{dife-gene} and \ref{fungen-lfijo}.

\begin{lemma}\label{egfdifference} Let $n, k$ be a positive integers and let $c_1, \dots, c_n$ be non-negative integers. For  $n=c_1+2c_2+\dots+nc_n$, the coefficient of $\frac{t_1^{c_1} \dots t_n^{c_n}}{c_1!\dots c_n!}$ in the expansion of  
\[
{\rm exp}\left(\sum_{\ell \geq 1} \sum_{g \in G_k(\ell)}(-1)^{\ell g+1}\frac{\ell^{g-1}}{g}t_\ell^g\right)
\]
is equal to the number $\operatorname{re}_k({\bf c})-\operatorname{ro}_k({\bf c})$, with ${\bf c}=(c_1, \dots, c_n)$. 
\end{lemma}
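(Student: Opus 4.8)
The plan is to read off the claimed generating function from Lemma~\ref{fungen-lfijo}, which handles a single cycle length, by gluing the different lengths together using the multiplicativity established in Lemma~\ref{dife-gene}; the only real work is bookkeeping with the (distinct) variables $t_\ell$.

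First I would reduce $\operatorname{re}_k(\mathbf c) - \operatorname{ro}_k(\mathbf c)$ to a product over cycle lengths. Fix $\mathbf c = (c_1,\dots,c_n)$ with $n = \sum_{\ell} \ell c_\ell$ and let $\sigma$ be a permutation of cycle type $\mathbf c$. If $\sigma$ has a $k$th root, group the cycles of $\sigma$ by their length and apply Lemma~\ref{dife-gene}: writing $\sigma = \sigma_1\cdots\sigma_m$ with $\sigma_i$ the product of all cycles of length $\ell_i$ (the distinct lengths occurring in $\sigma$), we obtain $\operatorname{re}_k(\sigma) - \operatorname{ro}_k(\sigma) = \prod_{i=1}^m\bigl(\operatorname{re}_k(\sigma_i) - \operatorname{ro}_k(\sigma_i)\bigr)$. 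Since the difference $\operatorname{re}_k - \operatorname{ro}_k$ of a permutation depends only on its cycle type, $\operatorname{re}_k(\sigma_i) - \operatorname{ro}_k(\sigma_i) = \operatorname{re}_k(\ell_i, c_{\ell_i}) - \operatorname{ro}_k(\ell_i, c_{\ell_i})$, and for every length $\ell$ with $c_\ell = 0$ we have $\operatorname{re}_k(\ell, 0) - \operatorname{ro}_k(\ell, 0) = 1$ (the identity on the empty support is its own unique $k$th root and is even). Hence
\[
\operatorname{re}_k(\mathbf c) - \operatorname{ro}_k(\mathbf c) = \prod_{\ell = 1}^{n}\bigl(\operatorname{re}_k(\ell, c_\ell) - \operatorname{ro}_k(\ell, c_\ell)\bigr).
\]
If instead $\sigma$ has no $k$th root, the left-hand side is $0$; the right-hand side is $0$ as well, because if every length-$\ell$ block $(\ell)^{c_\ell}$ admitted a $k$th root then the product of those roots (which commute, having disjoint supports) would be a $k$th root of $\sigma$, so some factor $\operatorname{re}_k(\ell, c_\ell) - \operatorname{ro}_k(\ell, c_\ell)$ must vanish. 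So the displayed identity holds in all cases.

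Next I would pass to generating functions. For each $\ell \in \{1,\dots,n\}$ set $F_\ell(t_\ell) := \sum_{c \ge 0}\bigl(\operatorname{re}_k(\ell, c) - \operatorname{ro}_k(\ell, c)\bigr)\frac{t_\ell^c}{c!}$, so that Lemma~\ref{fungen-lfijo} gives $F_\ell(t_\ell) = \exp\bigl(\sum_{g \in G_k(\ell)}(-1)^{\ell g+1}\frac{\ell^{g-1}}{g}t_\ell^g\bigr)$. Because $F_1,\dots,F_n$ are power series in pairwise distinct variables, the coefficient of $\frac{t_1^{c_1}\cdots t_n^{c_n}}{c_1!\cdots c_n!}$ in $\prod_{\ell=1}^n F_\ell(t_\ell)$ is simply $\prod_{\ell=1}^n\bigl(\operatorname{re}_k(\ell, c_\ell) - \operatorname{ro}_k(\ell, c_\ell)\bigr)$, which by the previous step equals $\operatorname{re}_k(\mathbf c) - \operatorname{ro}_k(\mathbf c)$. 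On the other hand, $\exp(a)\exp(b) = \exp(a+b)$ yields
\[
\prod_{\ell=1}^{n} F_\ell(t_\ell) = \exp\Bigl(\sum_{\ell=1}^{n}\sum_{g \in G_k(\ell)}(-1)^{\ell g+1}\frac{\ell^{g-1}}{g}t_\ell^g\Bigr),
\]
and, read as an identity of formal power series, extending the outer sum from $\ell \le n$ to $\ell \ge 1$ changes only terms involving variables $t_\ell$ with $\ell > n$, hence does not affect the coefficient of $t_1^{c_1}\cdots t_n^{c_n}$. Comparing the two evaluations of that coefficient gives the lemma.

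I do not expect a serious obstacle: granting Lemmas~\ref{dife-gene} and~\ref{fungen-lfijo}, the argument is essentially bookkeeping. The points that deserve care, and that I would state explicitly, are (i) that $\operatorname{re}_k - \operatorname{ro}_k$ is a cycle-type invariant and that the empty-support and no-root cases contribute the expected trivial factors, and (ii) that multiplying exponential generating functions in disjoint sets of variables multiplies the corresponding coefficients, so that the infinite sum over $\ell$ in the statement is harmless once one only extracts coefficients of monomials in $t_1,\dots,t_n$.
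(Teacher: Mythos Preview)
Your proposal is correct and follows exactly the route the paper takes: the paper simply states that the lemma ``follows from Lemmas~\ref{dife-gene} and~\ref{fungen-lfijo},'' and your write-up is a careful unpacking of precisely that deduction. Your extra care with the no-root case and with extending the sum from $\ell\le n$ to $\ell\ge 1$ is appropriate bookkeeping that the paper leaves implicit.
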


\begin{proof}[Proof of Theorem~\ref{maintheorem}]
Let $r_k(\si)$ denote the number of $k$th roots of permutation $\sigma$. We have that  
\[
2\operatorname{re}_k(\sigma)=\operatorname{re}_k(\si)+\operatorname{ro}_k(\si)+\operatorname{re}_k(\si)-\operatorname{ro}_k(\si)=r_k(\sigma)+(\operatorname{re}_k(\si)-\operatorname{ro}_k(\si)).
\]
Similarly $2\operatorname{ro}_k(\sigma)=r_k(\sigma)-(\operatorname{re}_k(\si)-\operatorname{ro}_k(\si))$. Therefore, the result follows immediately from Theorem~\ref{theolmr} and Lemma~\ref{egfdifference}.
\end{proof}

\section{Particular cases}\label{cases}
If $k$ is odd, then any solution of the equation $x^k=\si$ should have the same parity as $\si$, so the generating function is the same as the one given in Theorem~\ref{theolmr}. Therefore, in this section $k$ is a fixed even integer. 

In some examples, we will use, without an explicit mention, the following observation.

\begin{obs}\label{obs1}
Let $k$ be an even integer. If $\ell$ is even, then $G_k(\ell)$ is a set of even integers.
\end{obs}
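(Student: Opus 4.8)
The plan is to argue directly from the definition $G_k(\ell)=\{g\in\nn:\gcd(g\ell,k)=g\}$. Fix $g\in G_k(\ell)$, so that $\gcd(g\ell,k)=g$. Since $k$ is even we have $2\mid k$, and since $\ell$ is even we have $2\mid\ell$, hence $2\mid g\ell$. Therefore $2$ is a common divisor of $g\ell$ and $k$, so $2\mid\gcd(g\ell,k)=g$. Thus every element $g$ of $G_k(\ell)$ is even, which is exactly the claim.

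Alternatively, one can read off the conclusion from the explicit description of $G_k(\ell)$ recalled earlier in the excerpt: writing $k=p_1^{a_1}\cdots p_j^{a_j}$ with distinct primes $p_i$ and $a_i>0$, the hypothesis that $k$ is even means $p_i=2$ for some $i$, say $p_1=2$ with $a_1\ge 1$; the hypothesis that $\ell$ is even means $p_1=2\mid\ell$, so the corresponding exponent satisfies $b_1=a_1\ge 1$ for every element $p_1^{b_1}\cdots p_j^{b_j}$ of $G_k(\ell)$. Hence $2^{a_1}$ divides each such element, and in particular each element is even. There is no real obstacle here: the statement is a one-line divisibility observation, and the only point worth stating carefully is that the evenness of $k$ guarantees $2$ actually occurs among the prime factors of $k$, which is what makes the constraint $p_1\mid\ell$ bite.
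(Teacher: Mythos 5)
Your argument is correct: since $\ell$ and $k$ are both even, $2$ divides both $g\ell$ and $k$, hence divides $\gcd(g\ell,k)=g$. The paper states this observation without proof, treating it as immediate, and your one-line divisibility argument (or equivalently the reading-off from the explicit description of $G_k(\ell)$) is exactly the intended justification.
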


\subsection*{Permutations of cycle type $(\ell)^c$}

For a fixed positive integer $\ell$, we have that 
\begin{equation}\label{eq1}
\sum_{c\geq 0} \operatorname{re}_k(\ell, c)\frac{t^{c}}{c!}=\frac{1}{2}\exp \left( \sum_{g \in G_k(\ell)} \frac{\ell^{g-1}}{g}t^g \right)+\frac{1}{2}{\rm exp}\left(\sum_{g \in G_k(\ell)}(-1)^{\ell g+1}\frac{\ell^{g-1}}{g}t^g\right),
\end{equation}
and 
\begin{equation}\label{eq2}
\sum_{c\geq 0} \operatorname{ro}_k(\ell, c)\frac{t^{c}}{c!}=\frac{1}{2}\exp \left( \sum_{g \in G_k(\ell)} \frac{\ell^{g-1}}{g}t^g \right)-\frac{1}{2}{\rm exp}\left(\sum_{g \in G_k(\ell)}(-1)^{\ell g+1}\frac{\ell^{g-1}}{g}t^g\right).
\end{equation}

With these expressions, we can obtain the generating functions of the following sequences in OEIS: A000704, A061129, A061130, A061131,  A061132, A001465, A061136 and A061137. For example, sequence A061131 corresponds to the number of even $8$th roots of the identity permutation. In this case $\ell=1$ and $G_8(1)=\{1, 2, 4, 8\}$. Therefore
\begin{eqnarray*}
\sum_{c\geq 0} re_8(1, c)\frac{t^{c}}{c!}&=&\frac{1}{2}\exp\left(t + \frac{1}{2}t^2 + \frac{1}{4}t^4 + \frac{1}{8}t^8\right)+\frac{1}{2}\exp\left(t - \frac{1}{2}t^2 - \frac{1}{4}t^4 - \frac{1}{8}t^8\right)\\
&=&{\rm exp}\left(t\right){\rm cosh}\left(\frac{1}{2}t^2 + \frac{1}{4}t^4 + \frac{1}{8}t^8\right).
\end{eqnarray*}


We can make further simplifications of equations~(\ref{eq1}) and (\ref{eq2}).  First, we consider the case when $\ell$ is even.  By Observation~\ref{obs1} we have that

\begin{eqnarray*}
\sum_{c\geq 0} \operatorname{re}_k(\ell, c)\frac{t^{c}}{c!}
&=&{\rm cosh}\left( \sum_{g \in G_k(\ell)} \frac{\ell^{g-1}}{g}t^g \right)
\end{eqnarray*}
and
\begin{eqnarray*}
\sum_{c\geq 0} \operatorname{ro}_k(\ell, c)\frac{t^{c}}{c!}&=&{\rm sinh}\left( \sum_{g \in G_k(\ell)} \frac{\ell^{g-1}}{g}t^g \right).
\end{eqnarray*}

For $\ell$ odd, let $GO_k(\ell)=\{g \in G_k(\ell) \colon g \text{ is odd }\}$ and let $GE_k(\ell)=G_k(\ell)-GO_k(\ell)$. Then
\begin{eqnarray*}
\sum_{c\geq 0} \operatorname{re}_k(\ell, c)\frac{t^{c}}{c!}&=&\frac{1}{2}\exp \left( \sum_{g \in G_k(\ell)} \frac{\ell^{g-1}}{g}t^g \right)+\frac{1}{2}{\rm exp}\left(\sum_{g \in GO_k(\ell)}\frac{\ell^{g-1}}{g}t^g-\sum_{g \in GE_k(\ell)}\frac{\ell^{g-1}}{g}t^g\right),\\
&=&\exp \left( \sum_{g \in GO_k(\ell)} \frac{\ell^{g-1}}{g}t^g \right) {\rm cosh}\left( \sum_{g \in GE_k(\ell)}  \frac{\ell^{g-1}}{g}t^g \right).
\end{eqnarray*}
Similarly, for the case of odd $k$th roots we have
\begin{eqnarray*}
\sum_{c\geq 0} \operatorname{ro}_k(\ell, c)\frac{t^{c}}{c!}&=&{\rm exp}\left(\sum_{g \in GO_k(\ell)} \frac{\ell^{g-1}}{g}t^g \right) {\rm sinh}\left( \sum_{g \in GE_k(1)}  \frac{\ell^{g-1}}{g}t^g \right).
\end{eqnarray*}

For the case of the identity permutation ($\ell=1$) we have that $G_k(1)  =\left\{m\colon m|k\right\}$. Therefore,  
\begin{eqnarray*}
\sum_{c\geq 0} \operatorname{re}_k(1, c)\frac{t^{c}}{c!}&=&{\rm exp}\left(\sum_{\substack{g|k\\g \text{ odd }}} \frac{1}{g}t^g \right) {\rm cosh}\left( \sum_{\substack{g|k\\g \text{ even }}}  \frac{1}{g}t^g \right),
\end{eqnarray*}
and 
\begin{eqnarray*}
\sum_{c\geq 0} \operatorname{ro}_k(1, c)\frac{t^{c}}{c!}
&=&{\rm exp}\left(\sum_{\substack{g|k\\g \text{ odd }}} \frac{1}{g}t^g \right) {\rm sinh}\left( \sum_{\substack{g|k\\g \text{ even }}}  \frac{1}{g}t^g \right).
\end{eqnarray*}

In particular, for the case $k=2^m$, we have 
\begin{eqnarray*}
\sum_{c\geq 0} re_{2^m}(1, c)\frac{x^{c}}{c!}&=&\frac{1}{2}\exp\left(\sum_{i=0}^m \frac{1}{2^i}x^{2^i}\right)+\frac{1}{2}\exp\left(x-\sum_{i=1}^m \frac{1}{2^i}x^{2^i}\right)\\
&=&{\rm exp}(x){\rm cosh}\left(  \frac{1}{2}x^2 +\dots+ \frac{1}{2^m}x^{2^m} \right).
\end{eqnarray*}
This generating function was used in the work of Koda, Sato and Tskegahara~\cite{koda}. For the case of odd roots we have
\[
\sum_{c\geq 0} ro_{2^m}(1, c)\frac{x^{c}}{c!}={\rm exp}(x){\rm sinh}\left(  \frac{1}{2}x^2 +\dots+ \frac{1}{2^m}x^{2^m} \right).
\]
\subsection{Square roots of permutations}
For the case of even square roots we have the following consequence of Theorem~\ref{maintheorem}. 
\begin{corollary}\label{fg-raices-cua-pares} The coefficient of $t_1^{c_1} \dots t_n^{c_n} /(c_1!\dots c_n!)$ in the expansion of
\[
\prod_{j\geq 1}{\rm exp}\left(t_{2j-1}\right){\rm cosh}\left(\sum_{j \geq 1} \left (\frac{2j-1}{2}t_{2j-1}^2+jt_{2j}^2 \right) \right)
\]
is the number of even square roots of a permutation of cycle type ${\bf c}=(c_1, \dots, c_n)$, and  in the expansion of
\[
\prod_{j\geq 1}{\rm exp}\left(t_{2j-1}\right){\rm sinh}\left(\sum_{j \geq 1} \left (\frac{2j-1}{2}t_{2j-1}^2+jt_{2j}^2 \right) \right)
\]
is the number of odd square roots of a permutation of cycle type ${\bf c}$.
\end{corollary}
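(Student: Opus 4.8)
The plan is to obtain Corollary~\ref{fg-raices-cua-pares} as the $k=2$ specialization of Theorem~\ref{maintheorem}, so the whole argument reduces to evaluating the two exponents appearing there. First I would pin down $G_2(\ell)$: since $2$ is prime, the explicit description of $G_k(\ell)$ from the introduction gives $G_2(\ell)=\{2\}$ when $\ell$ is even and $G_2(\ell)=\{1,2\}$ when $\ell$ is odd. Writing the odd lengths as $\ell=2j-1$ and the even lengths as $\ell=2j$, the inner sum $\sum_{g\in G_2(\ell)}\frac{\ell^{g-1}}{g}t_\ell^g$ equals $t_{2j-1}+\frac{2j-1}{2}t_{2j-1}^2$ in the odd case and $j\,t_{2j}^2$ in the even case, so the first exponent of Theorem~\ref{maintheorem} becomes $\sum_{j\geq1}t_{2j-1}+S$ with $S=\sum_{j\geq1}\bigl(\frac{2j-1}{2}t_{2j-1}^2+j\,t_{2j}^2\bigr)$.

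Next I would treat the signed exponent. The sign attached to the $(\ell,g)$ term is $(-1)^{\ell g+1}$, which is $+1$ precisely when both $\ell$ and $g$ are odd and $-1$ otherwise; in particular, by Observation~\ref{obs1}, every $g\in G_2(\ell)$ with $\ell$ even is itself even, so those terms carry a minus sign. Consequently the only terms that survive with a plus sign are the $g=1$ terms, which occur only for odd $\ell=2j-1$ and contribute $\sum_{j\geq1}t_{2j-1}$, while each $g=2$ term flips sign. Hence the second exponent of Theorem~\ref{maintheorem} equals $\sum_{j\geq1}t_{2j-1}-S$.

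Finally I would assemble the two pieces. Substituting these exponents into the even-root generating function of Theorem~\ref{maintheorem} gives $\tfrac12\exp\bigl(\sum_{j}t_{2j-1}+S\bigr)+\tfrac12\exp\bigl(\sum_{j}t_{2j-1}-S\bigr)$; pulling out the common factor $\exp\bigl(\sum_{j\geq1}t_{2j-1}\bigr)=\prod_{j\geq1}\exp(t_{2j-1})$ and using $\tfrac12(e^{S}+e^{-S})=\cosh S$ yields exactly the first claimed expression, and the odd-root case is identical except that the minus sign in Theorem~\ref{maintheorem} turns $\tfrac12(e^{S}-e^{-S})$ into $\sinh S$. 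No genuine difficulty arises; the only point that needs care is the case analysis for the sign $(-1)^{\ell g+1}$, together with a sanity check that the index $j=1$ correctly contributes the fixed-point variable $t_1$ in the product $\prod_{j\geq1}\exp(t_{2j-1})$.
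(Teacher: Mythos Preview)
Your proposal is correct and follows essentially the same route as the paper's own proof: both specialize Theorem~\ref{maintheorem} to $k=2$, determine $G_2(\ell)$ by the parity of $\ell$, compute the two exponents term by term (your abbreviation $S$ is exactly the paper's combined $g=2$ contribution), and then factor out $\prod_{j\geq1}\exp(t_{2j-1})$ to convert the half-sum/half-difference of exponentials into $\cosh$/$\sinh$. The only cosmetic difference is that you organize the sign analysis of $(-1)^{\ell g+1}$ slightly more explicitly and invoke Observation~\ref{obs1}, whereas the paper simply writes out each case directly.
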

\begin{proof}
We rewrite Theorem~\ref{maintheorem} for the case of even square roots. When $k=2$, $G_2(\ell) \subseteq \{1, 2\}$. We have two cases depending of the parity of $\ell$. If $\ell=2j-1$, with $j \in \nn$, then $G_2(2j-1) =\{1, 2\}$. Thus	
	\[
	\sum_{g \in G_k(\ell)} (-1)^{\ell g+1}\frac{\ell^{g-1}}{g}t^g_{\ell}=t_{2j-1}-\frac{2j-1}{2}t^2_{2j-1} 
	\]
and
	\[
\sum_{g \in G_k(\ell)} \frac{\ell^{g-1}}{g}t^g_{\ell}=t_{2j-1}+\frac{2j-1}{2}t^2_{2j-1}.
\]
If $\ell=2j$, with $j \in \nn$, then $G_2(2j) =\{2\}$. Therefore  
	\[
	\sum_{g \in G_k(\ell)} (-1)^{\ell g+1}
	\frac{\ell^{g-1}}{g}t^g_{\ell}=- jt^2_{2j}
	\]
and 
	\[
\sum_{g \in G_k(\ell)} \frac{\ell^{g-1}}{g}t^g_{\ell}=jt^2_{2j}.
\]
Therefore, the exponential generating in Theorem~\ref{maintheorem} becomes
{\small
\[
	\frac{1}{2}\left(\exp  \left(  \sum_{j \geq 1} \left(t_{2j-1}+\frac{2j-1}{2}t^2_{2j-1}+jt^2_{2j} \right)\right)+\exp  \left(  \sum_{j \geq 1} \left(t_{2j-1}-\frac{2j-1}{2}t^2_{2j-1}-jt^2_{2j} \right)\right)\right).
	\]
	}
From which we obtain
	{\small 
	\[
\frac{1}{2}\prod_{j\geq 1} {\rm exp}\left(t_{2j-1}\right)\left( \prod_{j\geq 1}{\rm exp} \left( \frac{2j-1}{2}t_{2j-1}^2+jt_{2j}^2\right)+ \prod_{j\geq 1}{\rm exp} \left( -\frac{2j-1}{2}t_{2j-1}^2-jt_{2j}^2 \right)\right),
\]
}
that is equal to
\[
\prod_{j\geq 1}{\rm exp}\left(t_{2j-1}\right){\rm cosh}\left(\sum_{j \geq 1} \left (\frac{2j-1}{2}t_{2j-1}^2+jt_{2j}^2 \right) \right).
\]
The proof for the case of odd square roots is similar.
	\end{proof}


\section*{Acknowledgments}

L.M.R. was partially supported by PROFOCIE grant 2018-2021, through UAZ-CA-169.

\end{document}